\newenvironment{proof}[1][Proof]{\textbf{#1.} }{\ \rule{0.5em}{0.5em}}
\newtheorem{theorem}{Theorem}
\newtheorem{remark}{Remark}
\title{Deterministic Epidemic Models For Ebola Infection With Time-dependent Controls.} % change this
\author{E. Okyere
\thanks{Department of Mathematics and Statistics, UENR, Sunyani, {\tt eric.okyere@uenr.edu.gh}}
\and J. D. Ankamah\thanks{Department of Mathematics and Statistics, UENR, Sunyani,{\tt johnson.ankamah@uenr.edu.gh}}\and A. K. Hunkpe \thanks{Department of Mathematics and Statistics, UENR, Sunyani, {\tt hunkpeanthonykodzo@gmail.com}}\and D. Mensah\thanks{Department of Mathematics and Statistics, UENR, Sunyani, {\tt mensahdorcas704@gmail.com}}}
\begin{document}
%\pagenumbering{roman}
%\tableofcontents
%%%%%%%%%% PRELIMINARY MATERIAL %%%%%%%%%%
\maketitle
\begin{abstract}
In this paper, we have studied epidemiological models for Ebola infection using nonlinear ordinary differential equations and optimal control theory. We considered optimal control analysis of SIR and SEIR models for the deadly Ebola infection using vaccination, treatment and educational campaign as time-dependent controls functions. We have applied indirect methods to study existing deterministic optimal control epidemic models for Ebola virus disease. These methods in optimal control are based on Hamiltonian function and the Pontryagin's maximum principle to construct adjoint equations and optimality systems. The forward-backward sweep numerical scheme with fourth-order Runge-Kutta method is used to solve the optimality system for the various control strategies. From our numerical illustrations, we can conclude that, effective educational campaigns and vaccination of susceptible individuals as were as effective treatments of infected individuals can help reduce the disease transmission.

%\keywords{Optimal Control \and Forward-backward Sweep Method \and Ebola Infections}
% \PACS{PACS code1 \and PACS code2 \and more}
% \subclass{MSC code1 \and MSC code2 \and more}
\end{abstract}

\section{Introduction}
The re-emergence of the Ebola virus disease in $2014-2016$ in West Africa has been classified as the largest outbreak since the disease was first discovered in DRC in 1976 \citep{WHO2019ebola}. This highly infectious and deadly disease has claimed many lives and caused huge economic burden in the affected West Africa Countries. The recent $2018-2019$ outbreak in eastern DRC is a very complex situation due to insecurities which is seriously affecting public health workers response activities \citep{WHO2019ebola}.\\

Mathematical modelling of epidemics has contributed significantly in understanding the dynamical behaviour and control of infectious diseases \citep{hethcote2000mathematics}. The complex dynamics of Ebola virus disease has attracted the attention of many researchers who are interested in epidemiological modeling [see, e.g, \cite{tulu2017mathematicaltoday, dong2015evaluationtoday, althaus2014estimatingtoday, chowell2014transmissiontoday, NgwaEBOLA2016, weitz2015modeling, ndanguza2017analysis, diaz2018modified, atangana2014mathematical, jiang2017mathematical, berge2017simple, xia2015modeling, khan2015estimating, chowell2004basic, goufo2016stability, agusto2015mathematical, LUO2019EBOLA, ABAGUMEL2019}].\\

System of nonlinear equations that incorporates optimal control dynamics are key mathematical tools that are used in compartmental modelling to understand the spread of infectious diseases. In \cite{sharomi2017optimal}, they conducted a comprehensive survey on optimal control modeling of several infectious diseases. \cite{zakary2017multi} formulated epidemic model for controlling Ebola outbreak. A mathematical model for Ebola disease with time-dependent controls is proposed and numerically analyzed in \citep{bonyah2016optimal}. The authors in \citep{ahmad2016optimal} proposed and studied optimal control epidemic model for Ebola virus infection. \cite{area2017ebola} introduced and studied deterministic epidemic model for Ebola disease that incorporates optimal control dynamics with vaccination control. The authors in \citep{grigorieva2017optimal} developed and analyzed controlled dynamical model for Ebola virus infection.\\

Our present work is motivated by indirect methods in optimal control theory. These methods are based on the Pontryagin's maximum principle and  Hamiltonian function to construct adjoint equations and optimality systems for optimal control problems. Indirect methods have widely been applied by several authors, see the detailed survey on optimal modeling of infectious diseases by the authors in \citep{sharomi2017optimal} and references therein. Recently, indirect methods in optimal control have also been used to study the dynamical behaviour and control of Zika virus disease [see, e.g, \cite{bonyah2017theoreticalBB, MKHAN2019_ZIKA, miyaoka2019optimal, BONYAH2019_ZIKA}]. We are also inspired by the forward-backward numerical scheme with fourth order Runge-Kutta method for optimal control problems described in \citep{lenhart2007optimal}. \\

In the works by the authors in \citep{rachah2015mathematical, rachah2016dynamics}, they formulated and analyzed SIR and SEIR optimal control models for Ebola infection using direct methods. In both papers, they formulated objective functionals with dynamical state constrained equations. In \citep{rachah2015mathematical}, they  proposed one optimal control strategy for the SIR model and the same authors in \citep{rachah2016dynamics} proposed three different optimal control strategies for the SEIR model. After their models formulations, they applied the ACADO solver which is an automatic control and dynamic optimization tool to perform their numerical simulations. In our present work, will apply indirect methods to  study the mathematical models proposed in \citep{rachah2015mathematical, rachah2016dynamics}. In this study, we will also propose one additional optimal control strategy for the SIR model. We will analytically construct Hamiltonian function and optimality system for the various optimal control strategies. We will then apply the forward-backward sweep method with fourth-order Runge-Kutta method to perform numerical simulations in Matlab.\\

The rest of the paper is organized as follows. In section~\ref{eric44}, we will introduce the classic SIR epidemic model to describe the dynamical behaviour of Ebola infection. We will then consider optimal control problems for the SIR model with two control strategies. In section~\ref{abena779}, we will present the basic SEIR epidemic model to describe the transmission dynamics of Ebola virus disease. We will further consider SEIR optimal control problems for this infectious disease with three different control strategies. In all these sections, we will formulate Hamiltonian functions and then apply the Pontryagin's maximum principle to construct adjoint equations and optimality system for the various optimal control strategies. We will also perform numerical simulations for the optimality systems. Finally, we will conclude the paper in section~\ref{threesec}.

\section{SIR Model}\label{eric44}
In this section, we introduce the classic SIR epidemic model to describe the dynamical behaviour of Ebola infection. The model assumes constant population size with no vital dynamics \citep{hethcote2000mathematics}. The population is divided into three different classes with $S(t),\ I(t)$ and $R(t)$ representing Susceptible, Infected and Recovered individuals respectively. As in \citep{rachah2015mathematical}, the nonlinear dynamical system describing Ebola infection is given by

\begin{equation}\label{anthony1}
\begin{cases}
\frac{dS(t)}{dt}={-\frac{\nu S(t)I(t)}{N}}, \ \ \qquad \qquad S(0) = S_0\geq 0, &\\ \\
\frac{dI(t)}{dt} =  {\frac{\nu S(t)I(t)}{N}}-{\delta}{I(t)}, \qquad  I(0) = I_0\geq 0,\\ \\
\frac{dR(t)}{dt} = {\delta I(t)}, \ \qquad \qquad \qquad R(0) = R_0\geq 0.
\end{cases}
\end{equation}

with $S(t)+I(t)+R(t)=N,$\\

where $\nu$ is the infection rate and $\delta$ is the recovery rate.\\

In this study, we will work with proportional quantities instead of the actual populations by scaling each state variable ($S,\  I,\ R$) by the total population. For this purpose, we introduce new state variables ($s,\  i,\ r$) which are expressed in terms of the original state variables and the total population given as follows;

\begin{equation}\label{anthonySCALING}
\begin{cases}
s(t)=\frac{S(t)}{N},\\ \\
i(t)=\frac{I(t)}{N},\\ \\
r(t)=\frac{R(t)}{N}.
\end{cases}
\end{equation}

Differentiating the scaling equation~(\ref{anthonySCALING}) with respect to time $t$ and using the unscaled model problem~(\ref{anthony1}), we obtain the scaled SIR model describing the transmission dynamics of Ebola is as follows

\begin{equation}\label{anthony1erictoday}
\begin{cases}
\frac{ds(t)}{dt}={-\nu s(t)i(t)}, \qquad \qquad s(0) = s_0\geq 0, &\\ \\
\frac{di(t)}{dt} =  {\nu s(t)i(t)}-{\delta i(t)}, \qquad  i(0) = i_0\geq 0,\\ \\
\frac{dr(t)}{dt} = {\delta i(t)}, \ \ \qquad \qquad  \qquad r(0) = r_0\geq 0.
\end{cases}
\end{equation}

with $s(t)+i(t)+r(t)=1$,\\

where the new state variables $s(t),\ i(t)$ and $r(t)$ represent proportions of Susceptible, Infected and Recovered individuals respectively.\\

As in \citep{rachah2015mathematical}, we will use this scaled SIR model~(\ref{anthony1erictoday}) in subsections~\ref{sty1} and \ref{sty2} to formulate optimal control models for Ebola infection with two different optimal control strategies.

\subsection{SIR Model with Optimal Control Strategy 1}\label{sty1}
In this subsection, we consider a controlled dynamical system for the scaled SIR model~(\ref{anthony1erictoday}), using vaccination as time dependent control function, $\eta(t)$. For this optimal control problem, we want to reduce the number of infected individuals and the cost of vaccination. Therefore, in this control strategy, we minimize the objective functional $J(\eta)$ given by

\begin{equation}\label{tony}
 J(\eta)=\int^{t_{f}}_{0}\left[i(t)+\dfrac{B}{2}\eta^{2}(t)\right]dt
\end{equation}

subject to:

\begin{equation}\label{anthony2}
\begin{cases}
\frac{ds(t)}{dt}={-\nu s(t)i(t)- \eta (t)s(t)},\  \qquad s(0) = s_0\geq 0, \\ \\
\frac{di(t)}{dt} =  {\nu s(t)s(t)-\delta i(t)},\ \qquad  \qquad  i(0) = i_0\geq 0,\\ \\
\frac{dr(t)}{dt} = {\delta i(t)+\eta (t)s(t)},\ \ \qquad  \qquad r(0) = r_0\geq 0.
\end{cases}
\end{equation}

where the control set is given as:

\[{\mathcal{G}}_1=  \{\eta:\eta (t)\  \textrm{is lebesgue measurable},\ 0\leq \eta (t)\leq 1,\ t\in[0, t_{f}]\}\]

and the positive parameter $A$ is the weight on cost of vaccination. \\

The Hamiltonian function corresponding to the objective functional~(\ref{tony}) and the dynamic constrained state system~(\ref{anthony2}) is given by
\begin{equation}\label{hmtmldorcas}
\begin{split}
H & = i(t)+\dfrac{B}{2}\eta^{2}(t)\\\\
&+\varphi_{1} \left[- \nu s(t)i(t) - \eta (t)s(t)\right]\\ \\
 & +\varphi_{2}\left[\nu s(t)i(t) - \delta i(t)\right]\\ \\
 & +\varphi_{3}\left[\delta i(t)+\eta (t)s(t)\right]
\end{split}
\end{equation}

We then apply the Maximum Principle proposed by authors in \citep{pontryagin1962mathematical} to determine an optimal solution as follows:\\

Suppose that $(w, \eta)$ is an optimal solution for a controlled dynamical system, then there exist adjoint vector function $\varphi=(\varphi_1,\ \varphi_2,\ \cdots \varphi_n)$ which satisfy the system below;

\begin{equation}\label{anthony2a}
\begin{cases}
\frac{dw}{dt}={\frac{\partial H(t, w, \eta, \varphi)}{\partial \varphi}}, \\ \\
0 =  {\frac{\partial H(t, w, \eta, \varphi)}{\partial \eta}},\\ \\
\frac{d\varphi}{dt} = -{\frac{\partial H(t, w, \eta, \varphi)}{\partial w}}.
\end{cases}
\end{equation}

%By using the Hamiltonian function~(\ref{hmtmldorcas}) and the necessary conditions, we obtain the adjoint system and the control characterization as follows\\
Following the constructed Hamiltonian function~(\ref{hmtmldorcas}) and equation~(\ref{anthony2a}), we present the adjoint equations and the control characterisation as follows

\begin{theorem}\label{vicky1}
	Let ${\eta}^{*}$ be an optimal control and optimal state solutions $s^{*}, \ i^{*}, \ r^{*}$ of the corresponding controlled dynamical system~(\ref{tony})-(\ref{anthony2}) that minimize $J(\eta)$ over ${\mathcal{G}}_1$. Then there exist adjoint variables $\varphi_i$ \ for  $i=1,\ 2, 3$ which satisfy the system below
	
\begin{equation}
	\begin{cases}
	\displaystyle \frac{d \varphi_{1}}{dt} = \displaystyle {\nu i^* (t)(\varphi_{1}-\varphi_{2})+\eta^{*}(t) (\varphi_{1}-\varphi_{3})},\qquad  \\ \\ %[15pt]
	\displaystyle \frac{d \varphi_{2}}{dt} = \displaystyle {-1+\nu s^* (t)(\varphi_{1}-\varphi_{2})+\delta(\varphi_{2}-\varphi_{3})}, \qquad \\  \\ %[10pt]
	\displaystyle  \frac{d \varphi_{3}}{dt} = \displaystyle {0}.
	\end{cases}
	\end{equation}
	
with transversality conditions $\varphi_1(t_{f})=0,\ \varphi_2(t_{f})=0,\ \varphi_3(t_{f})=0 $

	and the control ${\eta}^{*}$ satisfies the optimality condition.
	
\begin{equation}\label{func}
\displaystyle	{\eta}^{*}(t)=min\left\{max\left\{0, \dfrac{s^* (t)}{B}(\varphi_{1}-\varphi_3)\right\}, 1\right\}
	\end{equation}
\end{theorem}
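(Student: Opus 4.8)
The plan is to apply the Pontryagin Maximum Principle exactly in the form recorded in \eqref{anthony2a}, using the Hamiltonian $H$ constructed in \eqref{hmtmldorcas}. Since the theorem statement already posits an optimal control $\eta^{*}$ with associated optimal trajectory $(s^{*}, i^{*}, r^{*})$, I would not dwell on existence; if one wished to justify it, the state system \eqref{anthony2} has a compact admissible control set $\mathcal{G}_1$, right-hand sides that are Lipschitz in the states and affine in $\eta$, and an integrand in \eqref{tony} that is convex in $\eta$, so a standard Fleming--Rishel type argument combined with boundedness of solutions on the finite horizon $[0,t_f]$ supplies an optimal pair. The substantive content is then the derivation of the adjoint system and the characterization of $\eta^{*}$.

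First I would compute the adjoint equations from $\dot{\varphi} = -\,\partial H/\partial w$ with $w = (s, i, r)$. Differentiating \eqref{hmtmldorcas} gives $\partial H/\partial s = -\nu i\,\varphi_{1} - \eta\,\varphi_{1} + \nu i\,\varphi_{2} + \eta\,\varphi_{3}$, which regroups as $-\nu i(\varphi_{1}-\varphi_{2}) - \eta(\varphi_{1}-\varphi_{3})$ and hence yields the stated $\dot{\varphi}_{1}$; next $\partial H/\partial i = 1 - \nu s\,\varphi_{1} + (\nu s - \delta)\varphi_{2} + \delta\,\varphi_{3} = 1 - \nu s(\varphi_{1}-\varphi_{2}) - \delta(\varphi_{2}-\varphi_{3})$, yielding $\dot{\varphi}_{2}$; and $\partial H/\partial r = 0$, yielding $\dot{\varphi}_{3} = 0$. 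Evaluating all three at $(s^{*}, i^{*}, r^{*}, \eta^{*})$ produces the adjoint system as written. The transversality conditions $\varphi_{i}(t_{f}) = 0$ for $i = 1,2,3$ follow because the objective functional \eqref{tony} carries no terminal (salvage) term and the terminal states are free.

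For the control characterization I would use the stationarity condition $\partial H/\partial \eta = 0$ on the interior of $\mathcal{G}_1$. From \eqref{hmtmldorcas}, $\partial H/\partial \eta = B\eta - s(\varphi_{1}-\varphi_{3})$, so the stationary value is $\bar{\eta} = \frac{s^{*}(t)}{B}(\varphi_{1}-\varphi_{3})$. Since $H$ is quadratic in $\eta$ with positive leading coefficient $B/2$, this $\bar{\eta}$ is the unique unconstrained minimizer of $H$ in $\eta$, and minimizing over the box constraint $0 \le \eta \le 1$ amounts to projecting $\bar{\eta}$ onto $[0,1]$: one takes $\eta^{*} = 0$ if $\bar{\eta} \le 0$, $\eta^{*} = \bar{\eta}$ if $0 < \bar{\eta} < 1$, and $\eta^{*} = 1$ if $\bar{\eta} \ge 1$, which is compactly the nested $\min$--$\max$ expression \eqref{func}.

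The step I expect to require the most care is not any single differentiation but the bookkeeping tied to the bounded control set: one must invoke the strict convexity of $H$ in $\eta$ so that the Pontryagin minimization collapses to the single stationarity equation plus a clipping, and — if a rigorous existence/uniqueness statement for the optimality system is also desired — one would combine a priori bounds on $(s^{*}, i^{*}, r^{*})$ and on the adjoint variables over $[0,t_f]$ with a Gronwall (or small-$t_f$ contraction) argument. The partial-derivative computations themselves are routine.
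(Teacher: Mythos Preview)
Your proposal is correct and follows essentially the same route as the paper: apply Pontryagin's principle to the Hamiltonian \eqref{hmtmldorcas}, read off the adjoint system from $\dot\varphi_i=-\partial H/\partial w_i$ with zero transversality conditions, and obtain the control characterization by solving $\partial H/\partial\eta=0$ on the interior and clipping to $[0,1]$. You supply considerably more detail than the paper's own proof (explicit partial derivatives, justification of transversality from the absence of a terminal cost, the convexity/projection argument, and remarks on existence), but the underlying argument is identical.
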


%$\displaystyle	u^{*}(t)=min\left\{1,max\{0, \dfrac{S^*}{B}(\varphi _{1}-\varphi _3)\}\right\}$

\begin{proof}
To derive the system of equations describing the adjoint variables and transversality conditions, we apply the Pontryagin's Maximum Principle and the Hamiltonian function~(\ref{hmtmldorcas}) as follows
	\begin{equation}\label{anthony2k}
	\begin{cases}
	\frac{d\varphi_{1}}{dt}=-\frac{\partial H}{\partial s}, \\ \\
	\frac{d\varphi_{2}}{dt}=-\frac{\partial H}{\partial i},\\ \\
	\frac{d\varphi_{3}}{dt}=-\frac{\partial H}{\partial r}.
	\end{cases}
	\end{equation}

with \begin{equation}\label{anthony3kd}
	\varphi_{i}(t_f)=0, \ \  i=1,\ 2,\ 3.
	\end{equation}

By using the property of the control space ${\mathcal{G}}_1$ and solving the differential equation given by
\begin{equation}\label{vickyabena}
	\frac{\partial H}{\partial \eta}=0
	\end{equation}

on the interior of the control set, the characterization of the optimal control is obtained as given  by equation~(\ref{func}).
\end{proof}

\subsection{SIR Model with Optimal Control Strategy 2}\label{sty2}	
In this subsection, we present a controlled dynamical system by incorporating treatment control, $\eta_1 (t)$ and educational campaign control, $\eta_2 (t)$ into the scaled SIR model~(\ref{anthony1erictoday}). For this optimal control problem, we want to reduce the number of infected individuals, the cost of educational campaign and cost of treatment. Therefore, in this control strategy, we minimise the objective functional given as

\begin{equation}\label{tony1a}
J(\eta_{1}, \eta_{2})=\int^{t_{f}}_{0}\left[C_1 i(t)+\dfrac{C_{2}}{2}\eta^{2}_{1}(t)+\dfrac{C_{3}}{2}\eta^{2}_{2}(t)\right]dt
\end{equation}
subject to

\begin{equation}\label{anthony3}
\begin{cases}
\frac{ds(t)}{dt}={-\nu s(t)i(t)- \eta_2 (t)s(t)},\qquad \qquad \qquad s(0) = s_0\geq 0, &\\ \\
\frac{di(t)}{dt} =  {\nu s(t)i(t)}-{\delta i(t)-\eta_1 (t)i(t)}, \qquad  \qquad  i(0) = i_0\geq 0,\\ \\
\frac{dr(t)}{dt} = {\delta i(t)+\eta_1 (t)i(t)+\eta_2 (t)s(t)} \qquad  \qquad r(0) = r_0\geq 0.
\end{cases}
\end{equation}

where the control set is given by

\[{\mathcal{G}}_2=  \{(\eta_1,\eta_2):\eta_i(t)\  \textrm{is lebesgue measurable},\ 0\leq \eta_i(t)\leq 1,\ t\in[0, t_{f}], \ \textrm{for} \ i=1,2\}\]

 and $C_1$ is a positive constant to keep a balance in the size of $i(t)$. The positive parameters  $C_2$ and  $C_3 $ are the weight on cost for treatments and educational campaigns. \\

The Hamiltonian function corresponding to equations~(\ref{tony1a}) and (\ref{anthony3}) is given as

\begin{equation}\label{hmtmlantony}
\begin{split}
H & = C_1 i(t)+\dfrac{C_{2}}{2}\eta^{2}_{1}(t)+\dfrac{C_{3}}{2}\eta^{2}_{2}(t)\\ \\
 & +\varphi_{1} \left[- \nu s(t)i(t) - \eta_{2}(t)s(t)\right]\\ \\
 & +\varphi_{2}\left[\nu s(t)i(t)- \delta i(t) -\eta_{1}(t) i(t)\right]\\ \\
 & + \varphi_{3}\left[\delta i(t) +\eta_{1}(t)i(t)+\eta_{2}(t)s(t)\right]
\end{split}
\end{equation}

We then apply the Maximum Principle proposed by authors in \citep{pontryagin1962mathematical} to determine an optimal solution as\\

Assume that $(w, \eta)$ is an optimal solution for a controlled dynamical system, then there exist adjoint vector function $\varphi=(\varphi_1,\ \varphi_2,\ \cdots \varphi_n)$ which satisfy the following equations

%If $(w, \eta)$ is an optimal solution of an optimal control, then there exist a non-trivial vector function $\varphi=(\varphi_1,\ \varphi_2,\ \cdots \varphi_n)$ satisfying the following conditions
\begin{equation}\label{anthony3avicky}
\begin{cases}
\frac{dw}{dt}={\frac{\partial H(t, w, \eta, \varphi)}{\partial \varphi}}, \\ \\
0 =  {\frac{\partial H(t, w,\eta, \varphi)}{\partial \eta}},\\ \\
\frac{d\varphi}{dt} = -{\frac{\partial H(t, w, \eta, \varphi)}{\partial w}}.
\end{cases}
\end{equation}

Using the formulated Hamiltonian function~(\ref{hmtmldorcas}) and equation~(\ref{anthony3avicky}), the adjoint equations and the control characterisation are presented in the following Theorem.

\begin{theorem}\label{vicky2}
Let $\eta^{*}_{1}$  and $\eta^{*}_{2}$  be optimal control pair and optimal state solutions $s^{*}, \ i^{*}, \ r^{*}$ of the corresponding controlled dynamical system~(\ref{tony1a})-(\ref{anthony3}) that minimize $J(\eta_1, \eta_2)$ over ${\mathcal{G}}_2$. Then there exist adjoint variables $\varphi_i$ \ for  $i=1,\ 2, 3$ satisfying
	
	\begin{equation}
	\begin{cases}
	\displaystyle \frac{d \varphi_{1}}{dt} = \displaystyle {\nu i^* (t)(\varphi_{1}-\varphi_{2})+\eta^{*}_{2}(t) (\varphi_{1}-\varphi_{3})},\qquad  \\ \\
	\displaystyle \frac{d \varphi_{2}}{dt} = \displaystyle {-C_1+\nu s^{*}(t)(\varphi_{1}-\varphi_{2})+\delta(\varphi_{2}-\varphi_{3})+\eta^{*}_{1}(t)(\varphi_{2}-\varphi_{3})}, \qquad \\ \\
	\displaystyle  \frac{d \varphi_{3}}{dt} = \displaystyle {0}.
	\end{cases}
	\end{equation}
	
	with transversality conditions  $\varphi_1(t_{f})=0,\ \varphi_2(t_{f})=0,\ \varphi_3(t_{f})=0 $
	
	and the control  pair $\eta^{*}_{1}$  and $\eta^{*}_{2}$  satisfies the optimality conditions
	
	\begin{equation}\label{funcantony}
	\begin{cases}
\displaystyle	\eta^{*}_{1}(t)=min\left\{max\left\{0, \dfrac{i^* (t)}{C_{2}}(\varphi _{2}-\varphi _3)\right\}, 1\right\}\\ \\
\displaystyle	\eta^{*}_{2}(t)=min\left\{max\left\{0, \dfrac{s^* (t)}{C_{3}}(\varphi _{1}-\varphi _3)\right\}, 1\right\}
	\end{cases}
	\end{equation}
\end{theorem}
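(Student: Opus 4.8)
The plan is to mirror the proof of Theorem~\ref{vicky1} verbatim, since the structure of the two optimal control problems is identical; only the dimension of the control vector and the explicit form of the running cost differ. First I would invoke the Pontryagin Maximum Principle \citep{pontryagin1962mathematical} applied to the Hamiltonian~(\ref{hmtmlantony}), which produces the adjoint system
\begin{equation*}
\frac{d\varphi_{1}}{dt}=-\frac{\partial H}{\partial s},\qquad
\frac{d\varphi_{2}}{dt}=-\frac{\partial H}{\partial i},\qquad
\frac{d\varphi_{3}}{dt}=-\frac{\partial H}{\partial r},
\end{equation*}
together with the transversality conditions $\varphi_i(t_f)=0$ for $i=1,2,3$. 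Differentiating $H$ in~(\ref{hmtmlantony}) with respect to $s$ gives $\partial H/\partial s = -\nu i^{*}\varphi_1 - \eta_2^{*}\varphi_1 + \nu i^{*}\varphi_2 + \eta_2^{*}\varphi_3$, and negating yields the stated equation for $d\varphi_1/dt$. Similarly, $\partial H/\partial i = C_1 - \nu s^{*}\varphi_1 + \nu s^{*}\varphi_2 - \delta\varphi_2 - \eta_1^{*}\varphi_2 + \delta\varphi_3 + \eta_1^{*}\varphi_3$, and negation produces the $d\varphi_2/dt$ equation after grouping terms as $\delta(\varphi_2-\varphi_3)$ and $\eta_1^{*}(\varphi_2-\varphi_3)$. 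Since $H$ contains no explicit $r$-dependence, $\partial H/\partial r=0$, giving $d\varphi_3/dt=0$.

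Next I would derive the control characterization from the optimality conditions $\partial H/\partial\eta_1=0$ and $\partial H/\partial\eta_2=0$ on the interior of the control set ${\mathcal{G}}_2$. Computing $\partial H/\partial\eta_1 = C_2\eta_1 - \varphi_2 i^{*} + \varphi_3 i^{*}$ and setting it to zero gives $\eta_1 = \frac{i^{*}}{C_2}(\varphi_2-\varphi_3)$; likewise $\partial H/\partial\eta_2 = C_3\eta_2 - \varphi_1 s^{*} + \varphi_3 s^{*}$ yields $\eta_2 = \frac{s^{*}}{C_3}(\varphi_1-\varphi_3)$. I would then account for the bounds $0\le\eta_i\le1$ by the standard projection argument, writing each control as the composition of $\max\{0,\cdot\}$ and $\min\{\cdot,1\}$, which gives exactly~(\ref{funcantony}). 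The convexity of the integrand in~(\ref{tony1a}) in $(\eta_1,\eta_2)$ (each appears as a positive multiple of its square) together with the bounded control set ensures that this interior stationary point is indeed the minimizer of $H$, so the characterization is valid.

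Honestly, there is no serious obstacle here: the whole argument is routine partial differentiation plus the textbook bounded-control projection, and it follows the template already laid out in the proof of Theorem~\ref{vicky1}. The only point requiring a small amount of care is the bookkeeping of signs when negating the partial derivatives of $H$ and recognizing the correct groupings $\nu i^{*}(\varphi_1-\varphi_2)$, $\nu s^{*}(\varphi_1-\varphi_2)$, $\delta(\varphi_2-\varphi_3)$, and $\eta_1^{*}(\varphi_2-\varphi_3)$ so that the final adjoint system matches the claimed form; I would double-check each term against~(\ref{hmtmlantony}) before finalizing. Existence of an optimal control pair, which is presupposed in the statement, would follow from the standard Fleming--Rishel type argument (bounded state trajectories on $[0,t_f]$, compact convex control set, convexity of the integrand), but since the theorem assumes an optimal pair exists I would not belabor it.
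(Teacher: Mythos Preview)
Your proposal is correct and follows precisely the approach the paper takes: the paper itself omits a detailed proof of Theorem~\ref{vicky2} and simply remarks that it is similar to that of Theorem~\ref{vicky1}, which is exactly the template you invoke. Your explicit partial-derivative computations from the Hamiltonian~(\ref{hmtmlantony}) are all correct and in fact supply more detail than the paper provides.
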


\begin{remark}
We remark that, the proof of Theorem~\ref{vicky2} and subsequent Theorems in the next section of this study are similar to that of Theorem~\ref{vicky1} and we therefore omit their proofs.
\end{remark}

%\begin{equation}\label{func}
%\displaystyle	u^{*}(t)=min\left\{max\left\{0, \dfrac{S^*}{B}(\varphi _{1}-\varphi _3)\right\}, 1\right\}
%	\end{equation}
%\begin{proof}
%	To derive the adjoint equations and transversality conditions we use the Hamiltonian function~(\ref{hmtmlantony}).
%	The adjoint equations is obtained from the Pontryagin's maximum principle
%	\begin{equation}\label{anthony3k}
%	\begin{cases}
%	\frac{d\varphi_{1}}{dt}=-\frac{\partial H}{\partial s}, \\ \\
%	\frac{d\varphi_{2}}{dt}=-\frac{\partial H}{\partial i},\\ \\
%	\frac{d\varphi_{3}}{dt}=-\frac{\partial H}{\partial r}.
%	\end{cases}
%	\end{equation}
%	
%with \begin{equation}\label{anthony3kd}
%	\varphi_{i}(t_f)=0, \ \  i=1,\ 2,\ 3.
%	\end{equation}
%	
%To get the characterization of the optimal control functions~(\ref{funcantony}), we solve the equation
%
%     \begin{equation}\label{anthony3kdocas}
%	\begin{cases}
%	\frac{\partial H}{\partial \eta_1}=0,\\ \\
%	\frac{\partial H}{\partial \eta_2}=0.
%	\end{cases}
%	\end{equation}
%	
%on the interior of the control set and using the property of the control space, we can derived the characterization~(\ref{funcantony}).
%\end{proof}
%
%

\subsection{Numerical Simulations and Discussions}\label{ericpower}
This subsection deals with numerical solutions of optimal control problems formulated in subsections~\ref{sty1} and \ref{sty2}. Analytical solution of nonlinear system of differential equations with optimal control is a very hard task in mathematical modelling. Therefore, in this study, we apply the efficient forward-backward sweep numerical scheme with fourth-order Runge-Kutta method to solve the optimality systems formulated for the two optimal control strategies. This numerical scheme has extensively been described by the authors in \citep{lenhart2007optimal} in their mathematical modelling textbook which is concern with the applications of differential equations and optimal control theory. For our numerical simulations, we have adapted the same model parameter values ($\nu=0.2,\ \delta=0.1$) as in the work by the authors in \citep{rachah2015mathematical} and initial conditions: $s_0=0.95,\ i_0=0.05,\ r_0=0$. For the positive parameters in the objectives functionals, we have assumed that $D=1,\ C_1=1,\ C_2=5$ and $C_3=5.$ In Fig.~\ref{fg2}, it is clear that, there is a significant decrease in the number of susceptible individuals with control strategies 1, 2 than without control. Figure~\ref{fg3} shows solution paths for infected individuals with control strategies 1, 2 and without control. This plot shows significant decrease in the infected individuals with control strategies 1, 2 than without control. In Fig~\ref{fg4}, there is a rapid increase in recovered individuals with control strategies 1, 2 than without control. Figures~\ref{fg1a} and \ref{fg1b} represents control functions for optimal control strategies 1 and 2 respectively.

\begin{figure}[!htbp]
	\centering
	\includegraphics[scale=0.7]{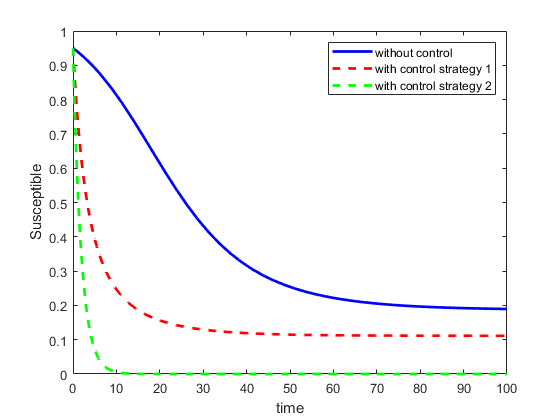}
	\caption{Solution paths for susceptible individuals with two control strategies and without control, $\nu=0.2$, \ $\delta=0.1$}
	\label{fg2}
\end{figure}

\begin{figure}[!htbp]
	\centering
	\includegraphics[scale=0.7]{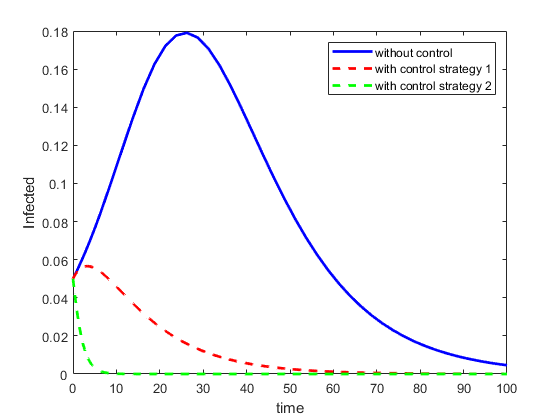}
	\caption{Solution paths for Infected individuals with two control strategies and without control, $\nu=0.2$, \ $\delta=0.1$}
	\label{fg3}
\end{figure}

\begin{figure}[!htbp]
	\centering
	\includegraphics[scale=0.7]{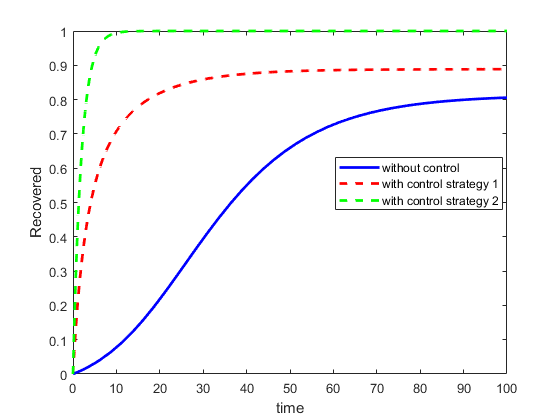}
	\caption{Solution paths for Recovered individuals with two control strategies and without control, $\nu=0.2$, \ $\delta=0.1$}
	\label{fg4}
\end{figure}

\newpage
\begin{figure}[!htbp]
	\centering
	\includegraphics[scale=0.7]{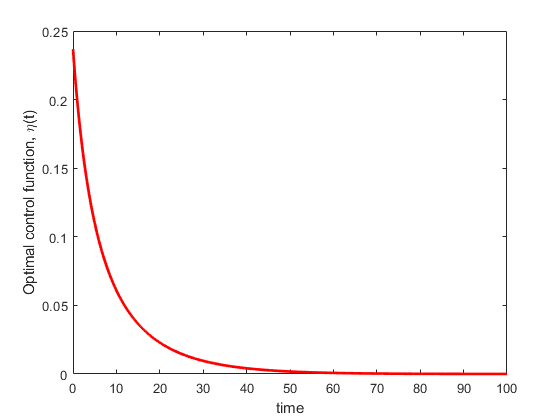}
	\caption{Optimal control function $\eta (t)$ for strategy 1}
	\label{fg1a}
\end{figure}

\begin{figure}[!htbp]
	\centering
	\includegraphics[scale=0.7]{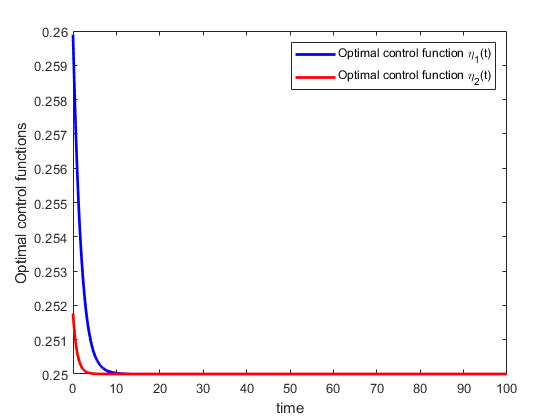}
	\caption{Optimal control functions $\eta_1 (t)$ and $\eta_2 (t)$ for strategy 2}
	\label{fg1b}
\end{figure}
\newpage

\section{SEIR Model}\label{abena779}
In this section, we introduce the basic SEIR mathematical model to describe the transmission dynamics of Ebola infection. This model also assumes constant population size with no vital dynamics(birth and death rates). The total population is divided into four different classes with $S(t),\ E(t),\ E(t)$ and $R(t)$ representing Susceptible, Exposed, Infectious and Recovered individuals respectively. Following the compartmental modeling concepts in \citep{hethcote2000mathematics}, the nonlinear dynamical system describing Ebola virus disease is given by

\begin{equation}\label{docas1}
\begin{cases}
\displaystyle \frac{dS(t)}{dt} = \displaystyle {-\frac{\nu S(t)I(t)}{N}}, \ \ \ \qquad \qquad \qquad   S(0) = S_0\geq 0, \\ \\
\displaystyle \frac{dE(t)}{dt} = \displaystyle {\frac{\nu S(t)I(t)}{N}}-{\rho}{E(t)},\ \ \ \ \ \qquad  E(0) = E_0\geq 0,\\ \\
\displaystyle \frac{dI(t}{dt} = \displaystyle {\rho E(t)}-{\delta}{I(t)},\ \ \qquad \qquad  \qquad  I(0) = I_0\geq 0,\\ \\
\displaystyle \frac{dR(t)}{dt} = \displaystyle {\delta I(t)},\qquad \qquad \qquad  \qquad  \qquad  R(0) = R_0\geq 0.
\end{cases}
\end{equation}

with $S(t)+E(t)+I(t)+R(t)=N,$\\

where $\nu$ is the transmission rate, $\rho$ represent the infectious rate and $\delta$ is the recovery rate.\\

As we did in section~\ref{eric44}, we will work with proportional quantities instead of the actual populations by scaling each state variable ($S(t),\  E(t),\ I(t),\ R(t)$) by the total population. We therefore introduce new state variables ($s(t),\  e(t),\ i(t),\ r(t)$) which are expressed in terms of the original state variables and the total population is given by\\

\begin{equation}\label{anthonySCALINGnew}
\begin{cases}
s(t)=\frac{S(t)}{N},\\ \\
e(t)=\frac{E(t)}{N},\\ \\
i(t)=\frac{I(t)}{N},\\ \\
r(t)=\frac{R(t)}{N}.
\end{cases}
\end{equation}

By differentiating the scaling equation~(\ref{anthonySCALINGnew}) with respect to time $t$ and using the unscaled model problem~(\ref{docas1}), we obtain the scaled SEIR model as follows

\begin{equation}\label{docas2today}
\begin{cases}
\displaystyle \frac{ds(t)}{dt} = \displaystyle {-\nu s(t)i(t)}, \ \qquad \qquad  \qquad   s(0) = s_0\geq 0, \\ \\
\displaystyle \frac{de(t)}{dt} = \displaystyle {\nu s(t)i(t)}-{\rho}{e(t)},   \qquad \qquad  e(0) = e_0\geq 0,\\ \\
\displaystyle \frac{di(t)}{dt} = \displaystyle {\rho e(t)}-{\delta}{i(t)}, \qquad \qquad  \qquad  i(0) = i_0\geq 0,\\ \\
\displaystyle \frac{dr(r)}{dt} = \displaystyle {\delta i(t)},\ \ \ \qquad \qquad \qquad  \qquad  r(0) = r_0\geq 0.
\end{cases}
\end{equation}

with $s(t)+e(t)+i(t)+r(t)=1,$\\

where the new state variables $s(t)\, e(t),\ i(t)$ and $r(t)$ represent proportions of Susceptible, Infected and Recovered individuals respectively.\\

As in \citep{rachah2016dynamics}, we will use this scaled SEIR model problem~(\ref{docas2today}) in subsections~\ref{stySEIR1},\  \ref{stySEIR2} and \ref{stySEIR3} to construct optimal control models for Ebola infection with three different control strategies.

\subsection{SEIR Model with Optimal Control Strategy 1}\label{stySEIR1}
In this subsection, we consider a controlled dynamical system for the scaled SEIR model~(\ref{docas2today}), using vaccination as time dependent control function, $\eta(t)$. As we did in subsection~\ref{sty1}, our main objective for this optimal control problem is that, we want to reduce the number of infected individuals and the cost of vaccination. Therefore in this control strategy, we minimize the objective functional $J(\eta)$ given by

\begin{equation}\label{Abena1}
J(\eta)= \int^{t_{f}}_{0}\left[i(t)+ \dfrac{D}{2}\eta^{2}(t)\right]dt
\end{equation}
subject to:

\begin{equation}\label{docas2}
\begin{cases}
\displaystyle \frac{ds(t)}{dt} = \displaystyle {-\nu s(t)i(t)-\eta (t)s(t)},\  \qquad \qquad   s(0) = s_0\geq 0, \\ \\
\displaystyle \frac{de(t)}{dt} = \displaystyle {\nu s(t)i(t)}-{\rho}{e(t)}, \qquad \qquad \qquad  e(0) = e_0\geq 0,\\ \\
\displaystyle \frac{di(t)}{dt} = \displaystyle {\rho e(t)}-{\delta}{i(t)},  \qquad \qquad \qquad  \qquad  i(0) = i_0\geq 0,\\ \\
\displaystyle \frac{dr(t)}{dt} = \displaystyle {\delta i(t)+\eta (t)s(t)},\ \ \qquad \qquad  \qquad  r(0) = r_0\geq 0.
\end{cases}
\end{equation}

where the control set is given by

\[{\mathcal{G}}_3=  \{\eta:\eta (t)\  \textrm{is lebesgue measurable},\ 0\leq \eta (t)\leq 1,\ t\in[0, t_{f}]\}\]
and the positive constant $D$ is the weight on the cost of vaccination.\\

The Hamiltonian function $H$ is then given as:

\begin{equation} \label{hmtmlantony34}
\begin{split}
H & =i(t)+\dfrac{D}{2}\eta^{2}(t)\\ \\
  & +\varphi_{1} \left[- \nu s(t)i(t) - \eta (t)s(t)\right]\\ \\
 &  +\varphi_{2}\left[\nu s(t)i(t) - \rho e(t)\right]\\ \\
 & +\varphi_{3}\left[\rho e(t)-\delta i(t)\right]\\ \\
 & +\varphi_{4}\left[ \delta i(t)+\eta (t)s(t)\right]
\end{split}
\end{equation}

We then apply the Maximum Principle proposed by authors in \citep{pontryagin1962mathematical} to determine an optimal solution as\\
%In order to determine the necessary conditions, we use the Pontryagin's maximum principle \citep{pontryagin1962mathematical} as follows;\\

Given that $(w, \eta)$ is an optimal solution for a controlled dynamical system, then there exist adjoint vector function $\varphi=(\varphi_1,\ \varphi_2,\ \cdots \varphi_n)$ which satisfy the following equations

%If $(x, \eta)$ is an optimal solution of an optimal control, then there exist a non-trivial vector function $\varphi=(\varphi_1,\ \varphi_2,\ \cdots \varphi_n)$ satisfying the following conditions

\begin{equation}\label{docas2aparty}
\begin{cases}
\frac{dw}{dt}={\frac{\partial H(t, w, \eta, \varphi)}{\partial \varphi}}, \\ \\
0 =  {\frac{\partial H(t, w, \eta, \varphi)}{\partial \eta}},\\ \\
\frac{d\varphi}{dt} = -{\frac{\partial H(t, w, \eta, \varphi)}{\partial w}}.
\end{cases}
\end{equation}

Using the formulated Hamiltonian function~(\ref{hmtmlantony34}) and equation~(\ref{docas2aparty}), the adjoint equations and the control characterisation are presented in the following Theorem.

\begin{theorem}\label{vicky3}
Let ${\eta}^{*}$ be an optimal control and optimal state solutions $s^{*}, e^{*}, i^{*}, r^{*}$ of the corresponding controlled dynamical system~(\ref{Abena1})-(\ref{docas2}) that minimize $J(\eta)$ over ${\mathcal{G}}_3$. Then there exist adjoint variables $\varphi_i$ \ for $i=1, 2, 3, 4$ satisfying

	\begin{equation}
	\begin{cases}
	\displaystyle \frac{d \varphi_{1}}{dt} = \displaystyle {\nu i^* (t)(\varphi_{1}-\varphi{2})+\eta^{*}(t)(\varphi _{1}-\varphi _{4})},\qquad  \\ \\ %[5pt]
	\displaystyle \frac{d \varphi_{2}}{dt} = \displaystyle {\rho (\varphi _2 -\varphi _3)}, \qquad \\ \\%[5pt]
	\displaystyle  \frac{d \varphi_{3}}{dt} = \displaystyle {-1 +\nu s^* (t) (\varphi_{1}-\varphi _{2})+ \delta (\varphi _{3}-\varphi _4)},\qquad \\ \\ %[5pt]
	\displaystyle  \frac{d \varphi_{4}}{dt} = \displaystyle {0}.\qquad
	\end{cases}
	\end{equation}
	
with transversality conditions $\varphi_1(t_{f})=0,\ \varphi_2(t_{f})=0,\ \varphi_3(t_{f})=0,\ \varphi_4(t_{f})=0 $
and the control $\eta^{*}(t)$ satisfies the optimality condition.

\begin{equation}\label{func4a}
\displaystyle	\eta^{*}(t)=min\left\{max\left\{0, \dfrac{s^*}{D}(\varphi _{1}-\varphi _4)\right\}, 1\right\}
	\end{equation}
\end{theorem}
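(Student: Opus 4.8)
The plan is to follow exactly the pattern established for Theorem~\ref{vicky1}, applying the Pontryagin Maximum Principle \citep{pontryagin1962mathematical} to the Hamiltonian~(\ref{hmtmlantony34}). First I would record that an optimal control $\eta^{*}$ with associated states $s^{*},e^{*},i^{*},r^{*}$ exists: solutions of the state system~(\ref{docas2}) stay bounded on $[0,t_f]$ (the compartment proportions lie in $[0,1]$ and their sum is conserved), the right-hand sides are Lipschitz in the states, the admissible set ${\mathcal{G}}_3$ is closed and bounded, and the running cost $i+\tfrac{D}{2}\eta^{2}$ is convex in $\eta$; a standard existence theorem of the type used in \citep{lenhart2007optimal} then applies. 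Granting this, the Maximum Principle supplies adjoint functions $\varphi_1,\dots,\varphi_4$ with $\dot{\varphi} = -\partial H/\partial w$, $w=(s,e,i,r)$, and, since the terminal state is free and there is no terminal payoff, with transversality conditions $\varphi_i(t_f)=0$ for $i=1,\dots,4$.

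Next I would compute the four state partials of $H$ in turn. Differentiating~(\ref{hmtmlantony34}) in $s$ collects the $-\nu s i$ term and the $\mp\eta s$ terms and, after negation, gives $\dot{\varphi}_1 = \nu i^{*}(\varphi_1-\varphi_2)+\eta^{*}(\varphi_1-\varphi_4)$; differentiating in $e$ touches only the $-\rho e$ and $\rho e$ terms, yielding $\dot{\varphi}_2 = \rho(\varphi_2-\varphi_3)$; differentiating in $i$ produces the explicit $1$ from the integrand together with the $\nu s$ and $\delta$ contributions, so $\dot{\varphi}_3 = -1+\nu s^{*}(\varphi_1-\varphi_2)+\delta(\varphi_3-\varphi_4)$; and since $H$ is independent of $r$, $\dot{\varphi}_4 = 0$. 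This reproduces the stated adjoint system.

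Finally, for the control characterization I would work on the interior of ${\mathcal{G}}_3$ and solve $\partial H/\partial\eta = D\eta - s^{*}\varphi_1 + s^{*}\varphi_4 = 0$, obtaining $\eta = \tfrac{s^{*}}{D}(\varphi_1-\varphi_4)$; because $H$ is strictly convex in $\eta$, imposing the bounds $0\le\eta\le1$ amounts to the usual clipping, which delivers~(\ref{func4a}). The differentiations are routine and the whole argument is structurally identical to the SIR case, the only new feature being the extra exposed compartment $e$ and its adjoint $\varphi_2$; the one genuinely delicate point, as in Theorem~\ref{vicky1}, is the existence/regularity step that justifies invoking the Maximum Principle in the first place, and it is this step I would expect to require the most care.
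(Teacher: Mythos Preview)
Your proposal is correct and follows exactly the approach the paper takes: the paper does not give a separate proof for this theorem but states (Remark~1) that it is proved just as Theorem~\ref{vicky1}, i.e., by applying Pontryagin's Maximum Principle to the Hamiltonian~(\ref{hmtmlantony34}), computing $\dot{\varphi}_i=-\partial H/\partial w_i$ with terminal conditions $\varphi_i(t_f)=0$, and solving $\partial H/\partial\eta=0$ on the interior before clipping to $[0,1]$. Your added existence discussion goes slightly beyond what the paper records, but the derivation of the adjoint system and the optimality condition is identical in structure and detail.
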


%\begin{proof}
%	To derive the adjoint equations and transversality conditions we use the Hamiltonian function~(\ref{hmtmlantony34}).
%	The adjoint equations is obtained from the Pontryagin's maximum principle
%	\begin{equation}\label{docas2k}
%	\begin{cases}
%	\frac{d\varphi_{1}}{dt}=-\frac{\partial H}{\partial S} \\ \\
%	\frac{d\varphi_{2}}{dt}=-\frac{\partial H}{\partial E}\\ \\
%	\frac{d\varphi_{3}}{dt}=-\frac{\partial H}{\partial I}\\ \\
%	\frac{d\varphi_{4}}{dt}=-\frac{\partial H}{\partial R}
%	\end{cases}
%	\end{equation}
%	
%with \begin{equation}\label{anthony3kd}
%	\varphi_{i}(t_{end})=0, \ \  i=1,\ 2,\ 3,\ 4.
%	\end{equation}
%
%	To get the characterization of the optimal control function~(\ref{func4a}), we solve the equation
%	\begin{equation}
%	\frac{\partial H}{\partial u}=0
%	\end{equation}
%	on the interior of the control set and using the property of the control space, we can derived the characterization~(\ref{func4a}).
%\end{proof}

\subsection{SEIR Model with Optimal Control Strategy 2}\label{stySEIR2}
In this subsection, we present a controlled dynamical system for the scaled SEIR model~(\ref{docas2today}), using vaccination as time dependent control function, $\eta(t)$. Our main objective for this strategy is to reduce number of infected and exposed individuals as were as the cost of vaccination. Therefore in this control strategy, we minimize the objective functional $J(\eta)$ given by

\begin{equation}\label{Abena21}
J(\eta)=\int^{t_{f}}_{0}\left[K_{1}e(t)+ K_{2}i(t)+\dfrac{K_{3}}{2}\eta^{2}(t)\right]dt
\end{equation}

subject to:

\begin{equation}\label{docas4}
\begin{cases}
\displaystyle \frac{ds(t)}{dt} = \displaystyle {-\nu s(t)i(t)-\eta (t)s(t)},\ \qquad \qquad   s(0) = s_0\geq 0, \\ \\
\displaystyle \frac{de(t)}{dt} = \displaystyle {\nu s(t)i(t)}-{\rho}{e(t)},\ \ \ \ \ \ \qquad \qquad  e(0) = e_0\geq 0,\\ \\
\displaystyle \frac{di(t)}{dt} = \displaystyle {\rho e(t)}-{\delta}{i(t)}, \qquad \qquad \qquad  \qquad  i(0) = i_0\geq 0,\\ \\
\displaystyle \frac{dr(t)}{dt} = \displaystyle {\delta i(t)+\eta (t)s(t)}, \ \ \qquad \qquad  \qquad  r(0) = r_0\geq 0.
\end{cases}
\end{equation}

where the control set is given as:

\[{\mathcal{G}}_4=  \{\eta:\eta(t)\  \textrm{is lebesgue measurable},\ 0\leq \eta(t)\leq 1,\ t\in[0, t_{f}]\}\]

and the parameters $K_1$ and $K_2$ are positive constants associated with exposed individuals and infected individuals and $K_3$ is a positive weight constant for control function $\eta (t)$.\\

The Hamiltonian $H$ is given by
\begin{equation} \label{eq1}
\begin{split}
H & =K_{1}e(t)+K_{2}i(t)+\dfrac{K_3}{2}\eta^{2}(t) \\ \\
 & +\varphi _{1}\left[-\nu s(t)i(t)-\eta(t) s(t)\right]\\ \\
 & +\varphi _{2}\left[\nu s(t)i(t)-\rho e(t)\right]\\ \\
 & + \varphi_{3}\left[\rho e(t)-\delta i(t)\right]\\ \\
 &+ \varphi_{4} \left[\delta i+\eta(t)s(t)\right]
\end{split}
\end{equation}

We then apply the Maximum Principle proposed by authors in \citep{pontryagin1962mathematical} to determine an optimal solution as\\

Assume that $(w, \eta)$ is an optimal solution for a controlled dynamical system, then there exist adjoint vector function $\varphi=(\varphi_1,\ \varphi_2,\ \cdots \varphi_n)$ which satisfy the following equations

\begin{equation}\label{docas4a}
\begin{cases}
\frac{dw}{dt}={\frac{\partial H(t, w, \eta, \varphi)}{\partial \varphi}}, \\ \\
0 =  {\frac{\partial H(t, x, \eta, \varphi)}{\partial \eta}},\\ \\
\frac{d\varphi}{dt} = -{\frac{\partial H(t, w, \eta, \varphi)}{\partial w}}.
\end{cases}
\end{equation}

Using the formulated Hamiltonian function~(\ref{eq1}) and equation~(\ref{docas4a}), the adjoint equations and the control characterisation are given in the following Theorem as

\begin{theorem}\label{vicky4}
Let ${\eta}^{*}$ be an optimal control and optimal state solutions $s^{*}, e^{*}, i^{*}, r^{*}$ of the corresponding controlled dynamical system~(\ref{Abena21})-(\ref{docas4}) that minimize $J(\eta)$ over ${\mathcal{G}}_3$. Then there exist adjoint variables $\varphi_i$ \ for $i=1, 2, 3, 4$ satisfying

	\begin{equation}
	\begin{cases}
	\displaystyle \frac{d \varphi_{1}}{dt} = \displaystyle {\nu i^* (t)(\varphi_{1}-\varphi{2})+\eta^{*}(t)(\varphi _{1}-\varphi _{4})},\qquad  \\ \\ %[5pt]
	\displaystyle \frac{d \varphi_{2}}{dt} = \displaystyle {-K_{1}+\rho(\varphi_{2}-\varphi_{3})}, \qquad \\ \\
	\displaystyle  \frac{d \varphi_{3}}{dt} = \displaystyle {-K_{2}+\nu s^* (t)(\varphi _{1}-\varphi _{2})+ \delta (\varphi _{3}-\varphi _{4})},\qquad \\ \\
	\displaystyle  \frac{d \varphi_{4}}{dt} = \displaystyle {0}.
	\end{cases}
	\end{equation}
	with transversality conditions $\varphi_1(t_{f})=0,\ \varphi_2(t_{f})=0,\ \varphi_3(t_{f})=0, \ \varphi_4(t_{f})=0 $
	and the control function $\eta^{*}$ is given by
	\begin{equation}\label{func4b}
	\displaystyle \eta^{*}(t)=min\left\{max\left\{0, \dfrac{s^* (t)}{K_3}(\varphi _{1}-\varphi _4)\right\}, 1\right\}
	\end{equation}
\end{theorem}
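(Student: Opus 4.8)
The plan is to mimic the proof of Theorem~\ref{vicky1} verbatim, since the structure of the optimal control problem is identical — only the objective functional and one additional compartment differ. First I would invoke the Pontryagin Maximum Principle \citep{pontryagin1962mathematical}, which guarantees the existence of adjoint functions $\varphi_1,\varphi_2,\varphi_3,\varphi_4$ satisfying the canonical system
\begin{equation*}
\frac{d\varphi_1}{dt}=-\frac{\partial H}{\partial s},\quad
\frac{d\varphi_2}{dt}=-\frac{\partial H}{\partial e},\quad
\frac{d\varphi_3}{dt}=-\frac{\partial H}{\partial i},\quad
\frac{d\varphi_4}{dt}=-\frac{\partial H}{\partial r},
\end{equation*}
with terminal (transversality) conditions $\varphi_i(t_f)=0$ for $i=1,2,3,4$, evaluated along the optimal state-control tuple $(s^*,e^*,i^*,r^*,\eta^*)$.

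Next I would differentiate the Hamiltonian~(\ref{eq1}) with respect to each state variable. Differentiating with respect to $s$ gives $\partial H/\partial s = -\nu i^*(\varphi_1-\varphi_2) - \eta^*(\varphi_1-\varphi_4)$, so $d\varphi_1/dt = \nu i^*(t)(\varphi_1-\varphi_2)+\eta^*(t)(\varphi_1-\varphi_4)$. Differentiating with respect to $e$ gives $\partial H/\partial e = K_1 - \rho(\varphi_2-\varphi_3)$, hence $d\varphi_2/dt = -K_1 + \rho(\varphi_2-\varphi_3)$. Differentiating with respect to $i$ gives $\partial H/\partial i = K_2 - \nu s^*(\varphi_1-\varphi_2) - \delta(\varphi_3-\varphi_4)$, so $d\varphi_3/dt = -K_2 + \nu s^*(t)(\varphi_1-\varphi_2) + \delta(\varphi_3-\varphi_4)$. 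Since $H$ does not depend on $r$, we get $d\varphi_4/dt = 0$. This reproduces the adjoint system in the statement.

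Finally, for the control characterization I would use the optimality (stationarity) condition $\partial H/\partial\eta = 0$ on the interior of the control set $\mathcal{G}_4$. Computing $\partial H/\partial\eta = K_3\eta - \varphi_1 s^* + \varphi_4 s^* = 0$ yields $\eta = \frac{s^*(t)}{K_3}(\varphi_1-\varphi_4)$. Projecting this onto the admissible interval $[0,1]$ by the usual $\min$–$\max$ truncation (valid because the integrand is convex in $\eta$, so the stationary point is a minimizer) gives the stated formula~(\ref{func4b}). The only step requiring slight care is the sign bookkeeping when moving the negative partial derivatives into the adjoint equations and when clipping the control; everything else is a routine transcription. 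Since, as noted in the Remark, these details are identical to Theorem~\ref{vicky1}, the proof is omitted.
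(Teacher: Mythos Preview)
Your proposal is correct and follows essentially the same approach as the paper: the paper explicitly omits the proof of this theorem (see the Remark after Theorem~\ref{vicky2}), noting it is identical in structure to that of Theorem~\ref{vicky1}, which is precisely what you have reproduced. Your partial-derivative computations from the Hamiltonian~(\ref{eq1}) and the resulting adjoint system and control characterization are all accurate.
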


\subsection{SEIR Model with Optimal Control Strategy 3}\label{stySEIR3}
In this subsection, we present a controlled dynamical problem by incorporating treatment control, $\eta_1 (t)$ and educational campaign control, $\eta_2 (t)$  into the SEIR model~(\ref{anthony1erictoday}). As we presented in subsection~\ref{sty2}, our main aim for this controlled problem is to minimize the number of infected individuals, the cost of educational campaigns and treatment. Therefore in this control strategy, we minimise the objective functional given by

\begin{equation}\label{Abena3}
J(\eta_{1}, \eta_{2})= \int^{t_{f}}_{0}\left[D_1 i(t)]+\dfrac{D_{2}}{2}\eta^{2}_{1}(t)+\dfrac{D_{3}}{2}\eta^{2}_{2}(t)\right]dt
\end{equation}

Subject to:

\begin{equation}\label{docas6anthony}
\begin{cases}
\displaystyle \frac{ds(t)}{dt} = \displaystyle {-\nu s(t)i(t)-\eta_2 (t)s(t)}, \ \ \ \ \qquad \qquad \qquad   s(0) = s_0\geq 0, \\ \\
\displaystyle \frac{de(t)}{dt} = \displaystyle {\nu s(t)i(t)}-{\rho}{e(t)}, \ \ \ \ \ \ \qquad \qquad \qquad \qquad  e(0) = e_0\geq 0,\\ \\
\displaystyle \frac{di(t)}{dt} = \displaystyle {\rho e(t)}-{\delta}{i(t)}-\eta_{1}(t)i(t), \ \ \qquad \qquad  \qquad  i(0) = i_0\geq 0,\\ \\
\displaystyle \frac{dr(t)}{dt} = \displaystyle {\delta i(t)+\eta_{1}(t)i(t)+\eta_2(t)s(t)},\ \ \ \ \qquad  \qquad  r(0) = r_0\geq 0.
\end{cases}
\end{equation}

where the control set is given as:

\[{\mathcal{G}}_5=  \{(\eta_1, \eta_2):\eta_i(t)\  \textrm{is lebesgue measurable},\ 0\leq \eta_i(t)\leq 1,\ t\in[0, t_{f}]\ \textrm{for} \ i=1,2\}\]

 where $D_1$ is the weight constant on infected individuals and $ D_2$ and $D_3$ are also positive weight parameters associated with controls $\eta_1 (t)$ and $\eta_2 (t)$ respectively.\\

The Hamiltonian function $H$ is given as
\begin{equation}\label{hmtmlkwasi}
\begin{split}
H& =D_1 i(t)]+\dfrac{D_{2}}{2}\eta^{2}_{1}(t)+\dfrac{D_{3}}{2}\eta^{2}_{2}(t)\\ \\
& +\varphi _{1}\left[-\nu s(t)i(t)-\eta_{2}(t)s(t)\right]\\ \\
& +\varphi_{2} \left[\nu s(t)i(t)-\rho e(t)\right]\\ \\
& +\varphi_{3}\left[\rho e(t)-\delta i(t)-\eta_{1}(t)i(t)\right] \\ \\
& +\varphi_{4}\left[\delta i(t)+\eta_{1}(t)i(t)+\eta_{2}(t)s(t)\right]
\end{split}
\end{equation}

We then apply the Maximum Principle proposed by authors in \citep{pontryagin1962mathematical} to determine an optimal solution as\\

Assume that $(w, \eta)$ is an optimal solution for a controlled dynamical system, then there exist adjoint vector function $\varphi=(\varphi_1,\ \varphi_2,\ \cdots \varphi_n)$ which satisfy the following equations

\begin{equation}\label{docas6b}
\begin{cases}
\frac{dw}{dt}={\frac{\partial H(t, w, \eta, \varphi)}{\partial \varphi}}, \\ \\
0 =  {\frac{\partial H(t, w, \eta, \varphi)}{\partial \eta}},\\ \\
\frac{d\varphi}{dt} = -{\frac{\partial H(t, w, \eta, \varphi)}{\partial w}}.
\end{cases}
\end{equation}

Using the formulated Hamiltonian function~(\ref{hmtmlkwasi}) and equation~(\ref{docas6b}), the adjoint equations and the control characterisation are presented in the following Theorem.

\newpage
\begin{theorem}\label{vicky5}
Let $\eta^{*}_{1}$  and $\eta^{*}_{2}$  be optimal control pair and optimal state solutions $s^{*}, e^{*}, i^{*}, r^{*}$ of the corresponding controlled dynamical system~(\ref{Abena3})-(\ref{docas6anthony}) that minimize $J(\eta_1, \eta_2)$ over ${\mathcal{G}}_5$. Then there exist adjoint variables $\varphi_i$ \ for  $i=1, 2, 3, 4$ satisfying

	\begin{equation}
	\begin{cases}
	\displaystyle \frac{d \varphi_{1}}{dt} = \displaystyle {\nu i^* (t) (\varphi_{1}-\varphi_{2})+\eta^{*}_{2}(t)(\varphi_{1}-\varphi_{4})} ,\qquad  \\ \\ %[5pt]
	\displaystyle \frac{d \varphi_{2}}{dt} = \displaystyle {\rho(\varphi_{2}-\varphi_{3})}, \qquad \\ \\ %[5pt]
	\displaystyle  \frac{d \varphi_{3}}{dt} = \displaystyle {-D_1+\nu s^* (t)(\varphi _{1}-\varphi _{2})+ \delta (\varphi _{3}-\varphi _{4})+\eta^{*}_{1} (t)(\varphi _{3}-\varphi _{4})},\qquad \\ \\ %[5pt]
	\displaystyle  \frac{d \varphi_{4}}{dt} = \displaystyle {0}.\qquad
	\end{cases}
	\end{equation}
	
	with transversality conditions $\varphi_1(t_{f})=0,\ \varphi_2(t_{f})=0,\ \varphi_3(t_{f})=0, \ \varphi_4(t_{f})=0 $
	
	and the control pair $\eta^{*}_{1}$  and $\eta^{*}_{2}$ satisfies the optimality condition.
	
\begin{equation}\label{docas6aworkdd}
\begin{cases}
\displaystyle \eta^{*}_{1}(t)=min\left\{max\left\{0, \dfrac{i^* (t)}{D_{2}}(\varphi _{3}-\varphi _{4})\right\}, 1\right\} \\ \\
\displaystyle \eta^{*}_{2}(t)=min\left\{max\left\{0, \dfrac{s^* (t)}{D_{3}}(\varphi _{1}-\varphi_{4})\right\}, 1\right\}
\end{cases}
\end{equation}
\end{theorem}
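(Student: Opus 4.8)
The plan is to run the same indirect‑method argument that was used for Theorem~\ref{vicky1}, now applied to the Hamiltonian~(\ref{hmtmlkwasi}) of the SEIR strategy~3 problem. Existence of an optimal control pair may be assumed from the standard hypotheses (the integrand of~(\ref{Abena3}) is convex in $(\eta_1,\eta_2)$, the control set ${\mathcal{G}}_5$ is closed, bounded and convex, the state system~(\ref{docas6anthony}) is Lipschitz and affine in the controls, and solutions are a priori bounded since $s,e,i,r$ are proportions). Granting this, the substance of the proof is the derivation of the adjoint system, the transversality conditions, and the control characterization from the Pontryagin Maximum Principle, exactly as in equation~(\ref{docas6b}).

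First I would build the adjoint equations from $\dfrac{d\varphi_j}{dt}=-\dfrac{\partial H}{\partial x_j}$ with $(x_1,x_2,x_3,x_4)=(s,e,i,r)$. Differentiating~(\ref{hmtmlkwasi}) with respect to $s$ gives $\dfrac{\partial H}{\partial s}=-\nu i(\varphi_1-\varphi_2)-\eta_2(\varphi_1-\varphi_4)$, hence $\dfrac{d\varphi_1}{dt}=\nu i^*(\varphi_1-\varphi_2)+\eta_2^*(\varphi_1-\varphi_4)$; with respect to $e$ it gives $\dfrac{d\varphi_2}{dt}=\rho(\varphi_2-\varphi_3)$; with respect to $i$ it gives $\dfrac{d\varphi_3}{dt}=-D_1+\nu s^*(\varphi_1-\varphi_2)+\delta(\varphi_3-\varphi_4)+\eta_1^*(\varphi_3-\varphi_4)$; and since $r$ does not appear in $H$, $\dfrac{d\varphi_4}{dt}=0$. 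Because the objective functional~(\ref{Abena3}) carries no terminal payoff, the transversality conditions are $\varphi_j(t_f)=0$ for $j=1,2,3,4$, which together with $\dfrac{d\varphi_4}{dt}=0$ also forces $\varphi_4\equiv0$, consistent with the stated system.

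Next I would obtain the control characterization from the optimality conditions $\dfrac{\partial H}{\partial \eta_1}=0$ and $\dfrac{\partial H}{\partial \eta_2}=0$ on the interior of ${\mathcal{G}}_5$. These read $D_2\eta_1-i^*(\varphi_3-\varphi_4)=0$ and $D_3\eta_2-s^*(\varphi_1-\varphi_4)=0$, so on the interior $\eta_1=\dfrac{i^*}{D_2}(\varphi_3-\varphi_4)$ and $\eta_2=\dfrac{s^*}{D_3}(\varphi_1-\varphi_4)$. Since $H$ is quadratic and strictly convex in each $\eta_k$ (coefficients $D_2,D_3>0$), the unconstrained stationary point is the pointwise minimizer, and projecting it onto $[0,1]$ by the usual three‑case argument yields the clipped expressions~(\ref{docas6aworkdd}). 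This is the same reasoning spelled out for Theorem~\ref{vicky1}; the only bookkeeping difference is that two controls are present and they enter three of the four state equations.

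There is no real obstacle here, so the one point deserving care is purely combinatorial: tracking which adjoint difference multiplies which control. Treatment $\eta_1$ moves individuals from the $I$ compartment to $R$, so it appears through the $i$‑equation and produces the factor $\varphi_3-\varphi_4$; the education/removal control $\eta_2$ moves individuals from $S$ to $R$, so it appears through the $s$‑equation and produces $\varphi_1-\varphi_4$. Getting these two differences right—and distinguishing them from the $\varphi_1-\varphi_2$ term coming from incidence—is what separates the formulas~(\ref{docas6aworkdd}) from the analogous characterizations in Theorems~\ref{vicky1}–\ref{vicky4}.
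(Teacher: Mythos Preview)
Your proposal is correct and follows exactly the approach the paper uses: the paper does not give a separate proof of Theorem~\ref{vicky5} but remarks that it is proved just like Theorem~\ref{vicky1}, i.e., by applying Pontryagin's Maximum Principle to the Hamiltonian~(\ref{hmtmlkwasi}) to obtain the adjoint equations $\dfrac{d\varphi_j}{dt}=-\dfrac{\partial H}{\partial x_j}$ with transversality conditions $\varphi_j(t_f)=0$, and then solving $\dfrac{\partial H}{\partial \eta_k}=0$ on the interior of the control set and projecting onto $[0,1]$. Your additional remarks on existence and on the strict convexity of $H$ in $(\eta_1,\eta_2)$ go slightly beyond what the paper states, but the core derivation is identical.
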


\subsection{Numerical Simulations and Discussions}
As we did in subsection~\ref{ericpower}, we have used the forward-backward sweep numerical scheme with fourth order Runge-Kutta method to solve the optimality systems formulated for the three optimal control strategies. For our numerical simulations, we have adapted the same model parameter values ($\nu=0.2,\ \delta=0.1, \ \rho=0.1887$) as in the work by the authors in \citep{rachah2016dynamics} with  initial conditions: $s_0=0.88,\ e_0=0.07,\ i_0=0.05,\ r_0=0$.  We have assumed $D=5,\ K_1=1,\ K_2=5,\ K_3=5, \ D_1=1,\ D_2=5$ and $D_3=5$. Figure~\ref{fg5} shows solution paths for susceptible individuals with control strategies $1, 2, 3$ and without control. It is clear from the plot that, there is a significant decrease in the number of susceptible individuals with control strategies than without control. In Fig.~\ref{fg6}, there is a rapid decrease in exposed individuals with control  strategies $1, 2, 3$ than without control. A similar effect can be observed in Fig.~\ref{fg7} for the  infected individuals. In Fig.~\ref{fg8}, there is a rapid increase in recovered individuals with control strategies $1, 2, 3$ than without control. Figures~\ref{fg1bb}, \ \ref{fg1bc}, and \ref{fg1bd} represents optimal control functions for strategies 1,\ 2 and 3 respectively.

\begin{figure}[!htbp]
	\centering
	\includegraphics[scale=0.7]{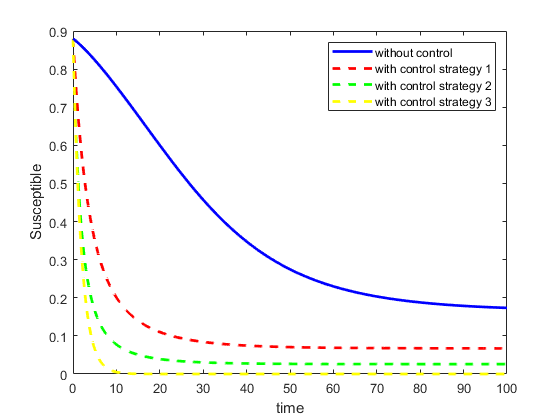}
	\caption{Solution path for Susceptible individuals with three control strategies and without control, $\nu=0.2$, $\rho=0.1887$, \ $\delta=0.1$}
	\label{fg5}
\end{figure}

\begin{figure}[!htbp]
	\centering
	\includegraphics[scale=0.7]{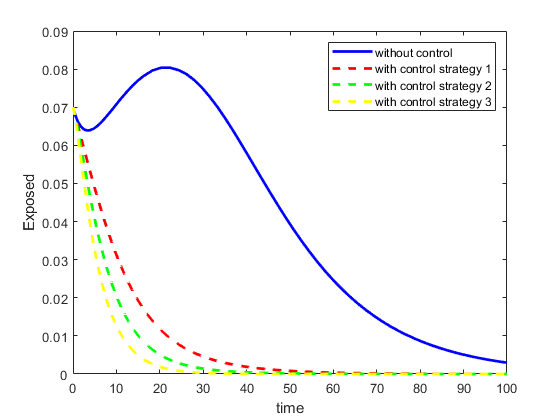}
	\caption{Solution paths for Exposed individuals with three control strategies and without control, $\nu=0.2$, $\rho=0.1887$, \ $\delta=0.1$}
	\label{fg6}
\end{figure}

\begin{figure}[!htbp]
	\centering
	\includegraphics[scale=0.7]{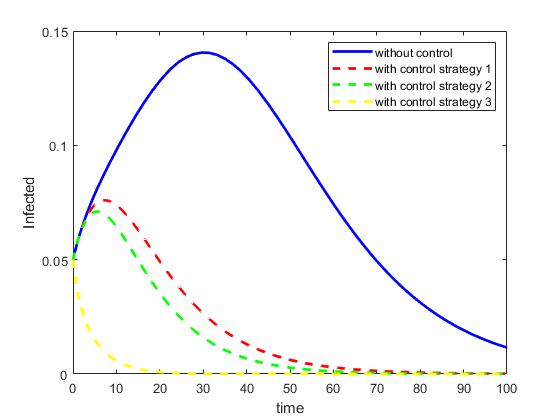}
	\caption{Solution paths for Infected individuals with three control strategies and without control, $\nu=0.2$, $\rho=0.1887$, \ $\delta=0.1$}
	\label{fg7}
\end{figure}

\begin{figure}[!htbp]
	\centering
	\includegraphics[scale=0.7]{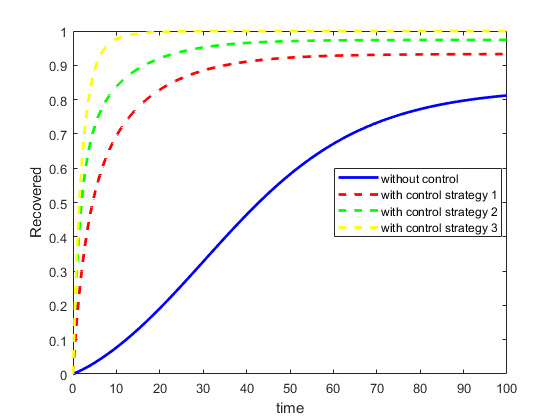}
	\caption{Solution paths for Recovered individuals with three control strategies and without control, $\nu=0.2$, $\rho=0.1887$, \ $\delta=0.1$}
	\label{fg8}
\end{figure}

\begin{figure}[!htbp]
	\centering
	\includegraphics[scale=0.7]{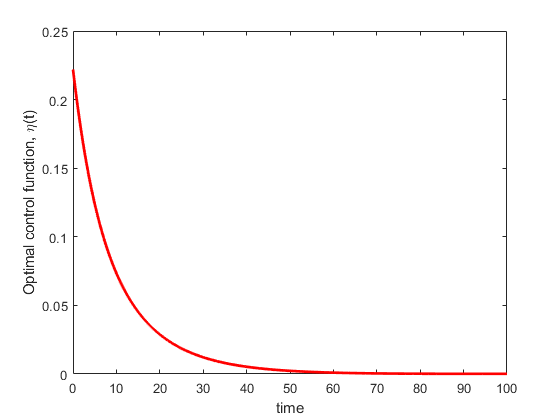}
	\caption{Optimal control function $\eta (t)$ for strategy 1}
	\label{fg1bb}
\end{figure}

\begin{figure}[!htbp]
	\centering
	\includegraphics[scale=0.7]{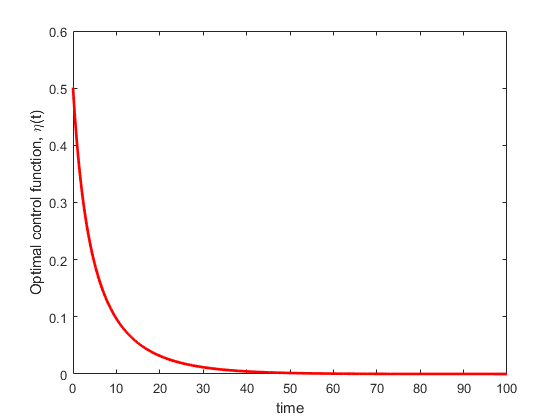}
	\caption{Optimal control functions $\eta (t)$ for strategy 2}
	\label{fg1bc}
\end{figure}

\begin{figure}[!htbp]
	\centering
	\includegraphics[scale=0.7]{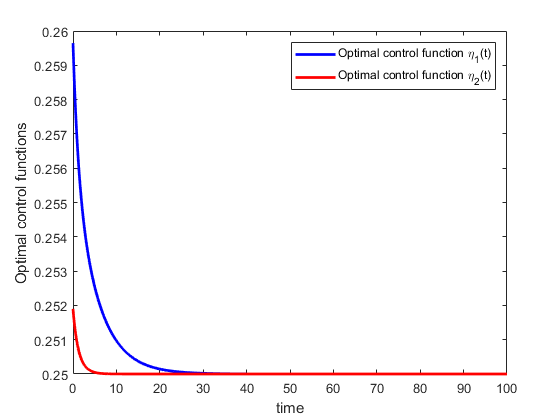}
	\caption{Optimal control functions $\eta_1 (t)$ and $\eta_2 (t)$ for strategy 3}
	\label{fg1bd}
\end{figure}

\newpage
\section{Conclusion}\label{threesec}
In this paper, we have studied epidemiological models for Ebola virus disease using nonlinear system of ordinary differential equation and optimal control theory. We have used indirect methods in optimal control to study existing mathematical models proposed by the authors in \citep{rachah2015mathematical, rachah2016dynamics}. Using the Pontryagin's maximum principle and Hamiltonian function, we have derived adjoint equations and optimality system for the various optimal control strategies. From the simulations results, we observed that, SIR model with optimal control strategies shows significant decrease in the proportions of infected and susceptible individuals and a rapid increase in the recovered individuals compared to SIR model without control. A similar effect was observed in the SEIR model with control strategies where there was a significant decrease in Susceptible, Exposed and Infected individuals and a rapid increase in the Recovered individuals. Our numerical results are similar to the once generated by the authors in \citep{rachah2015mathematical, rachah2016dynamics} who applied direct methods in optimal control for their mathematical models. Therefore, following the numerical results, we can conclude that, effective educational campaigns and vaccination of susceptible individuals as were as effective treatments of infected individuals can help reduced the disease transmission. In this work, we have also analytically and numerical studied one additional optimal control strategy for the SIR epidemic model.

\bibliographystyle{spphys}       % APS-like style for physics

\bibliography{eric}   % name your BibTeX data base

\end{document}